\newtheorem{thm}{Theorem}
\newtheorem{prp}{Proposition}
\newtheorem{cor}{Corollary}
\def\rank{{\mbox{rank}}}
\def\mod{{\mbox{mod}}}
\def\F{\bf{F}}
\def\h{{\bf h}}
\def\k{{\bf k}}
\def\0{{\bf 0}}
\def\1{{\bf 1}}
\title{Spectral symmetry in conference matrices}
\author{Willem H. Haemers\thanks{corresponding author; e-mail haemers@uvt.nl}
\\
{\it\small
Department of Econometrics and Operations Research, Tilburg University,}
\\
{\it\small  Tilburg, The Netherlands}
\\[5pt]
Leila Parsaei Majd\thanks{e-mail leila.parsaei@ipm.ir}
\\
{\it\small
School of Mathematics, Institute for Research in Fundamental Sciences (IPM),}
\\
{\it\small
P.O. Box 19395-5746, Tehran, Iran}
}
\date{}
\begin{document}
	
\maketitle

\begin{abstract}
A conference matrix of order $n$ is an $n\times n$ matrix $C$ with diagonal entries $0$ and off-diagonal entries $\pm 1$ satisfying
$CC^\top=(n-1)I$.
If $C$ is symmetric, then $C$ has a symmetric spectrum $\Sigma$ (that is, $\Sigma=-\Sigma$) and eigenvalues $\pm\sqrt{n-1}$.
We show that many principal submatrices of $C$ also have symmetric spectrum, which leads to
examples of Seidel matrices of graphs (or, equivalently, adjacency matrices of complete signed graphs) with a
symmetric spectrum.
In addition, we show that some Seidel matrices with symmetric spectrum can be characterized by this construction.
\\[3pt]
Keywords: Conference matrix, Seidel matrix, Paley graph, signed graph, symmetric spectrum,
\\
AMS subject classification: 05C50.
\end{abstract}

\section{Introduction}
Suppose $S$ is a symmetric matrix with zero diagonal and off-diagonal entries $0$ or $\pm 1$.
Then $S$ can be interpreted as the adjacency matrix of a signed graph.
Signed graphs are well studied, and a number of recent papers \cite{AMP, BCKW, GHMP, R} pay attention to signed graphs for which
the adjacency matrix has symmetric spectrum,
which means that the spectrum is invariant under multiplication by $-1$.
If $S$ contains no $-1$, then $S$ is the adjacency matrix of an ordinary graph,
which has a symmetric spectrum if and only if the graph is bipartite.
For general signed graphs there exist many other examples with symmetric spectrum.
Here we consider the case that no off-diagonal entries are $0$, in which case $S$ can be interpreted as the Seidel matrix of a graph
($-1$ is adjacent; $+1$ is non-adjacent).
It is known that a Seidel matrix of order $n$ is nonsingular if $n \not\equiv 1~(\mod~4)$ (see Greaves~at~al~\cite{GKMS}).
Clearly, a symmetric spectrum contains an eigenvalue $0$ if $n$ is odd,
therefore there exists no Seidel matrix with symmetric spectrum if $n\equiv 3~(\mod~4)$.
For all other orders Seidel matrices with spectral symmetry exist.
Examples are often built with smaller block matrices.
Here we use a different approach, and investigate Seidel matrices with spectral symmetry inside larger matrices known as conference matrices
(see next section).

The spectrum of $S$ does not change if some rows and the corresponding columns are multiplied by $-1$.
This operation is called switching.
If ${S}'$ can be obtained from $S$ by switching and/or reordering rows and columns, $S$ and ${S}'$ are called equivalent.
The corresponding graphs are called switching isomorphic, or switching equivalent.

\section{Conference matrices}\label{conf}

A conference matrix of order $n$ ($n\geq 2$) is an $n\times n$ matrix $C$ with diagonal entries $0$
and off-diagonal entries $\pm 1$ satisfying $CC^\top=(n-1)I$.
If $C$ is symmetric, then the spectrum $\Sigma$ of $C$ contains the eigenvalues $\sqrt{n-1}$ and $-\sqrt{n-1}$, both with multiplicity $n/2$,
and we write:
\[
\Sigma=\{\pm\sqrt{n-1}^{\, n/2}\}.
\]
Clearly
%$\Sigma=-\Sigma$, in which case we say that
the spectrum of $C$ is symmetric.
Conference matrices are well studied (see for example Section~13 of~Seidel~\cite{S}, and Section~10.4 of~Brouwer and Haemers~\cite{BH}).
The order $n$ of a conference matrix is even, and every conference matrix can be switched into a symmetric one when $n\equiv 2~(\mod~4)$
and into a skew-symmetric one if $n\equiv 0~(\mod~4)$.
Here we will not consider the skew case, because every skew-symmetric matrix has a symmetric spectrum.
Necessary for the existence of a symmetric conference matrix of order $n$ is that $n-1$ is the sum of two squares.

If $C$ is a symmetric conference matrix of order $n=4m+2\geq 6$, switched such that all off-diagonal entries in the first row and column are
equal to $1$, then $C$ is the Seidel matrix of a graph with an isolated vertex.
If we delete the isolated vertex, we have a strongly regular graph $G$ with parameters $(4m+1,2m,m-1,m)$
(this means that $G$ has order $4m+1$, is $2m$-regular, every edge is in precisely $m-1$ triangles,
and any two nonadjacent vertices have precisely $m$ common neighbors).
Conversely, the Seidel matrix of a strongly regular graph with these parameters extended with an isolated vertex,
is a symmetric conference matrix.
If $4m+1$ is a prime power $q$ (say), such a strongly regular graph can be constructed as follows.
The vertices of $G$ are the elements of the finite field $\F_q$,
where two vertices $x$ and $y$ $(x\neq y)$ are adjacent whenever $x-y$ is a square in $\F_q$.
The construction is due to Paley, the graph $G$ is known as Paley graph, and a corresponding conference matrix
$C$ is a Paley conference matrix of order $n=q+1$, which we shall abbreviate to $PC(n)$.
Other constructions are known.
Mathon~\cite{M} has constructed conference matrices of order $n=q p^2+1$, where
$p$ and $q$ are prime powers, $q\equiv 1~(\mod~4)$, and $p\equiv 3~(\mod~4)$.
For $n=6$, 10, 14 and $18$, every conference matrix of order $n$ is a $PC(n)$.
There is no conference matrix of order $22$ ($21$ is not the sum of two squares),
and there are exactly four non-equivalent conference matrices of order 26, one of which is a $PC(26)$.
The smallest order for which existence is still undecided is 86.

\section{The tool}

\begin{thm}\label{main}
Suppose
\[
A=
\left[
\begin{array}{cc}
A_1 & M\\
M^\top & A_2
\end{array}
\right]
\]
is a symmetric orthogonal matrix with zero diagonal.
Let $n_i$ be the order, and let $\Sigma_i$ be the spectrum of $A_i$ ($i=1,2$).
Assume $n_1\leq n_2$, and define $m=(n_2-n_1)/2$.
Then
\[
\Sigma_2 = -\Sigma_1 \cup \{\pm 1^m\}.
\]
\end{thm}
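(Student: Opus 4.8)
The plan is to exploit that $A$ is symmetric and orthogonal, so $A^2=I$ and every eigenvalue of $A$ equals $\pm 1$. Writing out $A^2=I$ blockwise gives the three identities
\[
A_1^2+MM^\top=I,\qquad A_1M+MA_2=0,\qquad M^\top M+A_2^2=I,
\]
the first and third coming from the diagonal blocks and the middle one from the off-diagonal block. The middle identity, rewritten as $A_1M=-MA_2$, is the engine of the proof: it intertwines $A_1$ and $-A_2$.

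First I would use this intertwining to match the eigenvalues of $A_1$ and $A_2$ that differ from $\pm1$. If $A_2v=\mu v$, then $A_1(Mv)=-MA_2v=-\mu\,(Mv)$, so $Mv$ lies in the $(-\mu)$-eigenspace of $A_1$. The third block identity gives $\|Mv\|^2=v^\top M^\top Mv=(1-\mu^2)\|v\|^2$, so for $\mu\neq\pm1$ the map $v\mapsto Mv$ is injective on the $\mu$-eigenspace of $A_2$; hence the multiplicity of $\mu$ in $\Sigma_2$ is at most that of $-\mu$ in $\Sigma_1$. Running the identical argument with $M^\top$ in place of $M$ (using $A_2M^\top=-M^\top A_1$ together with the first block identity and $\|M^\top w\|^2=(1-\lambda^2)\|w\|^2$) yields the reverse inequality, so these multiplicities are equal. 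Thus $\Sigma_2\setminus\{\pm1\}=-(\Sigma_1\setminus\{\pm1\})$ as multisets, which is exactly the non-$\pm1$ part of the claim.

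It remains to count how often $+1$ and $-1$ occur. Let $a_\pm$ and $b_\pm$ denote the multiplicities of $\pm1$ in $\Sigma_1$ and $\Sigma_2$; the assertion is equivalent to $b_+=a_-+m$ and $b_-=a_++m$. Comparing orders and using the bijection just established gives $b_++b_-=a_++a_-+2m$. For the second relation I would invoke the zero diagonal: $\mathrm{tr}(A_1)+\mathrm{tr}(A_2)=\mathrm{tr}(A)=0$, and since the non-$\pm1$ eigenvalues of $A_2$ are the negatives of those of $A_1$ their contributions cancel, leaving $(a_+-a_-)+(b_+-b_-)=0$. Solving these two linear relations gives $b_+=a_-+m$ and $b_-=a_++m$. (That $m$ is a nonnegative integer is automatic: zero diagonal and $A^2=I$ force $\mathrm{tr}(A)=0$, so $A$ has equally many eigenvalues $+1$ and $-1$, its order $n_1+n_2$ is even, and hence $n_2-n_1$ is even.)

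The main obstacle I anticipate is the bookkeeping in the second step — proving the two opposite inequalities so that the correspondence on non-$\pm1$ eigenvalues is an exact, multiplicity-preserving bijection rather than a mere containment; the injectivity of $M$ on the relevant eigenspaces, guaranteed by $\|Mv\|^2=(1-\mu^2)\|v\|^2$, is precisely what rules out collapse. Once that bijection is secured, isolating the $\pm1$ multiplicities reduces to the short linear computation driven by the order count and the vanishing trace.
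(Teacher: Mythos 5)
Your proof is correct, and its first half is essentially the paper's: the same three block identities extracted from $A^2=I$, and the same use of the intertwining relation $A_1M=-MA_2$ to pair each eigenvalue $\mu\neq\pm1$ of $A_2$ with $-\mu$ in $A_1$. (You establish multiplicity preservation by injectivity of $v\mapsto Mv$ on eigenspaces via $\|Mv\|^2=(1-\mu^2)\|v\|^2$, while the paper runs a rank computation on $M^\top V$; these are the same argument in different clothing, and both then invoke the symmetric argument with $M^\top$ for the reverse inequality.)

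Where you genuinely diverge is the endgame, and there your version is the rigorous one. The paper finishes with a single sentence: $\mathrm{trace}(A_1)=\mathrm{trace}(A_2)=0$ ``implies that for both matrices the eigenvalues $-1$ and $1$ have the same multiplicities.'' That claim does not follow from the traces alone --- it would require the non-$\pm1$ part of each spectrum to sum to zero --- and it is in fact false in general. For a counterexample, scale the order-$10$ Paley conference matrix (the Seidel matrix of the Paley graph on $9$ vertices together with an isolated vertex) by $1/3$, and let $A_1$ be the block on an independent set of size $4$ (the isolated vertex plus a maximum independent set of the Paley graph). Then $A_1=\frac{1}{3}(J-I)$ has spectrum $\{1,\ (-\tfrac13)^3\}$, and the complementary block $A_2$, which comes from the Seidel matrix of an induced $6$-cycle, has spectrum $\{1,\ (-1)^2,\ (\tfrac13)^3\}$: neither block has equal multiplicities of $+1$ and $-1$, yet the conclusion of the theorem holds, with $m=1$. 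Your bookkeeping --- the order count $b_++b_-=a_++a_-+2m$, the relation $(a_+-a_-)+(b_+-b_-)=0$ obtained from $\mathrm{trace}(A)=0$ after the paired non-$\pm1$ eigenvalues cancel, and then solving this linear system for $b_\pm$ --- is exactly what is needed to get $b_+=a_-+m$ and $b_-=a_++m$ without the unwarranted symmetry assumption. So your proposal does not merely reproduce the paper's proof; it repairs its final step.
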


\noindent
(Recall that $\{\pm 1^m\}$ means that $1$ and $-1$ are both repeated $m$ times.)

\begin{proof}
We have
\[
I = A^2=
\left[
\begin{array}{cc}
A_1^2+M M^\top & A_1 M+M A_2\\
M^\top A_1 + A_2 M^\top & A_2^2 + M^\top\!M
\end{array}
\right].
\]
This implies $A_1 M = -M A_2$, $A_1^2+MM^\top\! = I$ and $A_2^2 + M^\top\! M = I$.
For $i=1,2$ let $\Sigma_i'$ be the sub-multiset of $\Sigma_i$ obtained by deleting all eigenvalues equal to $\pm 1$.
Suppose $\lambda\in\Sigma_1'$ is an eigenvalue of $A_1$ with multiplicity $\ell$.
Define $V$ such that its columns span the eigenspace of $\lambda$.
Then $A_1 V=\lambda V$, and rank$(V)=\ell$.
Moreover, $\lambda M^\top\! V = M^\top\! A_1 V = -A_2 M^\top\! V$.
Therefore $-\lambda$ is an eigenvalue of $A_2$ and the columns of $M^\top\! V$ are eigenvectors.
Using $A_1^2+MM^\top\! = I$ and $\lambda\neq\pm 1$ we have
\[
\rank(V) \geq \rank(M^\top\! V) \geq \rank(MM^\top\! V) = \rank((I-A_1^2)V) = \rank((1-\lambda^2)V) = \rank(V).
\]
Therefore $\rank(M^\top V) = \rank(V)$, and the multiplicity ${\ell}'$  of $-\lambda\in\Sigma_2'$ is at least $\ell$.
Conversely, $\ell\geq{\ell}'$ and therefore $\ell = {\ell}'$.
This implies that $\Sigma_1' = -\Sigma_2'$.
Finally, trace$(A_1) = \mbox{trace}(A_2)=0$ implies that for both matrices the eigenvalues $-1$ and $1$ have the same multiplicities.
\end{proof}	

\begin{cor}\label{cor}
Suppose
\[
C=
\left[
\begin{array}{cc}
C_1 & N\\
N^\top & C_2
\end{array}
\right]
\]
is a symmetric conference matrix of order $n$.
\\
$(i)$ ~$C_1$ has a symmetric spectrum if and only if $C_2$ has a symmetric spectrum.
\\
$(ii)$~If $C_1$ and $C_2$ have symmetric spectrum then, except for eigenvalues equal to $\pm\sqrt{n-1}$,
$C_1$ and $C_2$ have the same spectrum.
\end{cor}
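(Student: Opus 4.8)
The plan is to reduce the corollary to Theorem~\ref{main} by rescaling. Since $C$ is a symmetric conference matrix, $C^2=CC^\top=(n-1)I$, so $A:=C/\sqrt{n-1}$ is a symmetric orthogonal matrix with zero diagonal, partitioned conformally into blocks $A_i=C_i/\sqrt{n-1}$ and $M=N/\sqrt{n-1}$. First I would record that scaling by $1/\sqrt{n-1}$ carries the spectrum of $C_i$ to that of $A_i$, and in particular turns an eigenvalue $\pm\sqrt{n-1}$ of $C_i$ into an eigenvalue $\pm 1$ of $A_i$, and conversely. Because the order $n$ of a conference matrix is even, the two block orders $n_1,n_2$ have equal parity, so $(n_2-n_1)/2$ is an integer; relabelling if necessary I may assume $n_1\le n_2$ (no generality is lost, since both claims are symmetric in $C_1$ and $C_2$) and set $m=(n_2-n_1)/2\ge 0$.

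Applying Theorem~\ref{main} to $A$ then gives $\Sigma(A_2)=-\Sigma(A_1)\cup\{\pm 1^{\,m}\}$, and multiplying every eigenvalue by $\sqrt{n-1}$ yields the single master identity
\[
\Sigma(C_2)\;=\;-\Sigma(C_1)\;\cup\;\{\pm\sqrt{n-1}^{\,m}\},
\]
from which both parts follow. The observation that drives the bookkeeping is that the appended block $\{\pm\sqrt{n-1}^{\,m}\}$ is itself invariant under negation.

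For part $(i)$, if $C_1$ has symmetric spectrum then $-\Sigma(C_1)=\Sigma(C_1)$, so the identity reads $\Sigma(C_2)=\Sigma(C_1)\cup\{\pm\sqrt{n-1}^{\,m}\}$, a multiset whose negation is itself; hence $C_2$ has symmetric spectrum. For the converse I would negate the identity to get $-\Sigma(C_2)=\Sigma(C_1)\cup\{\pm\sqrt{n-1}^{\,m}\}$, equate it with $\Sigma(C_2)=-\Sigma(C_1)\cup\{\pm\sqrt{n-1}^{\,m}\}$ under the hypothesis that $C_2$ is symmetric, and cancel the common summand $\{\pm\sqrt{n-1}^{\,m}\}$ (equal multiset sums sharing a term force equality of the remaining parts), concluding $-\Sigma(C_1)=\Sigma(C_1)$. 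For part $(ii)$, assuming both spectra symmetric I again replace $-\Sigma(C_1)$ by $\Sigma(C_1)$ to obtain $\Sigma(C_2)=\Sigma(C_1)\cup\{\pm\sqrt{n-1}^{\,m}\}$, so $\Sigma(C_2)$ differs from $\Sigma(C_1)$ only by $m$ extra copies of each of $\pm\sqrt{n-1}$; deleting all eigenvalues equal to $\pm\sqrt{n-1}$ from both therefore leaves identical multisets.

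I do not expect a serious obstacle, as the corollary is essentially Theorem~\ref{main} restated after rescaling. The one point that needs care is the treatment of the extreme eigenvalues $\pm\sqrt{n-1}$: by eigenvalue interlacing these are permitted to occur already \emph{inside} $\Sigma(C_1)$ and $\Sigma(C_2)$, so both the cancellation step and the final ``except for $\pm\sqrt{n-1}$'' clause must be phrased at the level of multisets, tracking multiplicities throughout rather than reasoning with sets.
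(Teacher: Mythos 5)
Your proposal is correct and takes exactly the paper's approach: the paper's entire proof is ``Apply Theorem~\ref{main} to $A=\frac{1}{\sqrt{n-1}}C$,'' and you do precisely that, merely spelling out the rescaling and multiset bookkeeping that the paper leaves implicit. Your care with multiplicities of $\pm\sqrt{n-1}$ in the cancellation step is sound (and is indeed justified by the multiset form of Theorem~\ref{main}), so nothing is missing.
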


\begin{proof}
Apply Theorem~\ref{main} to $A=\frac{1}{\sqrt{n-1}}C$.
\end{proof}	

Theorem~\ref{main} is a special case of an old tool, which has proved to be useful in spectral graph theory.
It is, in fact, a direct consequence of the inequalities of Aronszajn (see~\cite{H}, Theorem~1.3.3).

\section{Submatrices}

Clearly a Seidel matrix of order 1 or 2 has symmetric spectrum, so by Corollary~\ref{cor} we obtain Seidel matrices with spectra
%$\{-\sqrt{n-1}^{(n-2/2}, 0 , \sqrt{n-1}^{(n-2)/2}\}$ and $\{-\sqrt{n-1}^{(n-4/2}, -1, 1 , \sqrt{n-1}^{(n-4)/2}\}$
$\{0,\ \pm\sqrt{n-1}^{\,(n-2)/2}\}$ and $\{\pm 1,\ \pm\sqrt{n-1}^{\,(n-4)/2}\}$
if we delete one or two rows and the corresponding columns from a symmetric conference matrix of order~$n$.
In the next section we will characterise this construction.

As mentioned earlier, there is no Seidel matrix with symmetric spectrum if $n\equiv 3~(\mod~4)$.
For $n=4$ and $5$, there is exactly one equivalence class of Seidel matrices with spectral symmetry, represented by:
\[
S_4=
{\small
\left[
\begin{array}{rrrr}
0&1&1&1\\
1&0&-1&1\\
1&-1&0&-1\\
1&1&-1&0
\end{array}
\right]
},
\mbox{ and }
S_5=
{\small
\left[
\begin{array}{rrrrr}
0&1&1&1&1\\
1&0&-1&1&1\\
1&-1&0&-1&1\\
1&1&-1&0&-1\\
1&1&1&-1&0
\end{array}
\right]
}
\]
with spectra
\[
\{\pm 1,\ \pm\sqrt{5}\} \mbox{ and } \{0,\ \pm\sqrt{5}^2\}.
\]

\begin{prp}
After suitable switching, every symmetric conference matrix of order $n\geq 6$ contains $S_4$ and $S_5$ as a principal submatrix.
\end{prp}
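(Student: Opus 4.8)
The plan is to reduce the statement to a single combinatorial fact about conference graphs. First observe that $S_4$ is exactly the leading $4\times 4$ principal submatrix of $S_5$; hence it suffices to show that, after switching, $C$ contains $S_5$ as a principal submatrix, because the same switching will then display $S_4$ as well (namely on the first four of the five chosen indices). Writing $n=4m+2$ with $m\geq 1$, I would switch $C$ so that all off-diagonal entries in the first row and column equal $1$. As recalled in Section~\ref{conf}, the resulting matrix is then the Seidel matrix of a graph consisting of an isolated vertex together with a strongly regular conference graph $G$ on $4m+1$ vertices, with parameters $(4m+1,2m,m-1,m)$.

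Next I would reinterpret $S_5$ in graph terms. With its first row and column switched to all ones, which is exactly the form displayed above, $S_5$ is precisely the Seidel matrix of $K_1\cup P_4$: an isolated vertex together with an induced path on four vertices. Consequently, if $G$ contains four vertices $a,b,c,d$ inducing a path $a\,b\,c\,d$, then the principal submatrix of the switched $C$ indexed by the isolated vertex together with $a,b,c,d$ (with the path vertices ordered consecutively) equals $S_5$. Thus the whole proposition comes down to showing that every conference graph contains an induced $P_4$.

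This last step is the crux and the main obstacle. I would argue that $G$ is not a cograph, using the classical fact that cographs are exactly the $P_4$-free graphs. Since $\mu=m\geq 1$, the graph $G$ is connected; and since the complement of a conference graph is again a conference graph (the parameters $(4m+1,2m,m-1,m)$ are self-complementary, as one checks that $\bar\mu=m\geq 1$), the complement $\bar G$ is connected as well. A connected cograph on at least two vertices is a join of two smaller cographs, so its complement is disconnected; since both $G$ and $\bar G$ are connected and $G$ has $4m+1\geq 5$ vertices, $G$ cannot be a cograph, and therefore contains an induced $P_4$.

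Once the induced $P_4$ is secured, the required matrix identity is immediate from the switching normalisation, and the embedding of $S_4$ inside the produced $S_5$ finishes the argument. One could avoid the cograph characterisation by a direct count: every conference graph already contains many induced paths $P_3$ (one checks that each vertex is the centre of exactly $m^2$ of them), and a short computation with the parameters $\lambda=m-1$, $\mu=m$ shows that such a $P_3$ can always be extended to an induced $P_4$; but the cograph argument is cleaner and uniform in $m$.
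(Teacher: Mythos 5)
Your proof is correct, but the key step is carried out differently from the paper. Both arguments begin identically: switch $C$ so that all off-diagonal entries of the first row and column equal $1$, making $C$ the Seidel matrix of an isolated vertex together with a strongly regular graph $G$ with parameters $(4m+1,2m,m-1,m)$, and observe that $S_5$ is the Seidel matrix of $K_1\cup P_4$; the problem then reduces to finding an induced $P_4$ in $G$. (Your preliminary remark that $S_4$ is the leading $4\times 4$ principal submatrix of $S_5$ is a small but genuine simplification: the paper instead treats $S_4$ separately, via the---trivial---existence of an induced $P_3$.) Where you diverge is in producing the induced $P_4$. The paper does this by a direct count with the parameters: fix an edge $\{x,y\}$, take $z$ adjacent to $y$ but not to $x$, and note that among the $m$ vertices adjacent to $x$ but not to $y$ at least one, say $w$, is non-adjacent to $z$, since otherwise $x$ and $z$ would have $m+1>\mu$ common neighbours; then $\{w,x,y,z\}$ induces a $P_4$. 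You instead invoke the characterisation of cographs as exactly the $P_4$-free graphs, together with the facts that $G$ and its complement are both connected (both have $\mu=m\geq 1$, the parameter set being self-complementary), while a connected cograph on at least two vertices has a disconnected complement. Both routes are sound. The paper's argument is completely self-contained and uses nothing beyond the parameters; yours is less computational and in fact proves a more general statement---any strongly regular graph whose complement is also connected contains an induced $P_4$---but at the price of importing the cograph theorem as a black box.
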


\begin{proof}
The graphs of $S_4$ and $S_5$ have an isolated vertex.
If we delete the isolated vertex we obtain the paths $P_3$ and $P_4$.
So, it suffices to show that a strongly regular graph $G$ with parameters $(4m+1,2m,m-1,m)$ contains $P_3$ and $P_4$ as an induced subgraph.
The presence of $P_3$ in $G$ is trivial.
Fix an edge $\{x,y\}$ in $G$, and let $z$ be a vertex adjacent to $y$, but not to $x$.
Then there are $m$ vertices which are adjacent to $x$ and not to $y$, and at least one of them ($w$ say) is nonadjacent to $z$,
since otherwise $x$ and $z$ would have $m+1$ common neighbors.
Thus the set $\{w,x,y,z\}$ induces a $P_4$.
\end{proof}

By Corollary~\ref{cor} and the above proposition we know that there exist Seidel matrices of order $n-4$ and $n-5$ with spectra
\[\{\pm 1,\ \pm\sqrt{5},\ \pm\sqrt{n-1}^{(n-8)/2}\}, \mbox{ and }
\{0,\ \pm\sqrt{5}^2,\ \pm\sqrt{n-1}^{(n-10)/2}\},
\]
respectively, whenever there exists a symmetric conference matrix of order $n\geq 10$.

Next we investigate how Corollary~\ref{cor} can be applied to a $PC(n)$ for $n=10$, 14, and 18 with a submatrix of order 6, 8, or 9.
Up to equivalence there exist four Seidel matrices of order 6 with symmetric spectrum (see~Van~Lint and Seidel~\cite{LS},
or Ghorbani~et~al~\cite{GHMP}).
The spectra are:
\[
\Sigma_1=\{\pm 1,\ \pm\sqrt{5},\ \pm 3\},~
\Sigma_2=\{\pm\sqrt{5}^3\},~
\Sigma_3=\{\pm 1,\ \pm\sqrt{7\pm 2\sqrt{5}}\},~
\Sigma_4=\{\pm 1^2,\ \pm\sqrt{13}\}.
\]
Only $\Sigma_1$ is the spectrum of a submatrix of a $PC(10)$,
each of the spectra $\Sigma_1,~\Sigma_2$ and $\Sigma_3$ belongs to a submatrix of a $PC(14)$,
and all four occur as the spectrum of a submatrix of a $PC(18)$.
So by Corollary~\ref{cor} we obtain Seidel matrices of order 8 and 12 with spectra
\[
\Sigma_i\cup\{\pm\sqrt{13}\} \mbox{ for } i=1,2,3 \mbox{ and }
\Sigma_i\cup\{\pm\sqrt{17}^3\} \mbox{ for } i=1,\ldots,4,
\]
respectively.
All graphs of order 8 with a symmetric Seidel spectrum are given in Figure~6 of Ghorbani~et~al~\cite{GHMP}.
%We checked by computer which ones are an induced subgraph of a conference graph of order 18.
Up to equivalence and taking complements there are twenty such graphs (we just found three of these).
By computer we found that six of these graphs have a Seidel matrix, which is a submatrix of a $PC(18)$.
So Corollary~\ref{cor} gives six possible symmetric spectra for the graphs on the remaining 10 vertices.
However, it turns out that these six spectra belong to seven non-equivalent graphs, of which two have the same Seidel spectrum.
These seven graphs are given in Figure~\ref{10} (the last seven graphs).
The same phenomenon occurs if we delete $S_4$ from a $PC(14)$.
This can be done in two non-equivalent ways,
which leads to two non-equivalent graphs with spectrum $\{\pm 1,\ \pm\sqrt{5},\ \pm\sqrt{13}{\,}^3\}$ (the first two in Figure~\ref{10}).
The Seidel matrices of order 9 with symmetric spectrum are also given in \cite{GHMP}.
It turns out that none of these is a submatrix of a $PC(18)$.
But note that we already found two non-equivalent Seidel matrices of order 9 with symmetric spectrum,
%with spectra $\{0,\pm 3^4\}$  $\{0,\pm\sqrt{5}^2,\pm\sqrt{13}^2\}$
one in a $PC(10)$ and one in a $PC(14)$.
Also the Seidel matrix of order 8 with spectrum $\{\pm 1,\ \pm 3^3\}$ is a submtrix of a $PC(10)$, but not of a $PC(14)$ or a $PC(18)$.
Similarly, the Seidel matrix of order 8 with spectrum $\Sigma_1\cup\{\pm\sqrt{13}\}$ is a submatrix of a $PC(14)$ (as we saw above),
but not of a $PC(18)$.
\begin{figure}[h]
\epsfig{file = 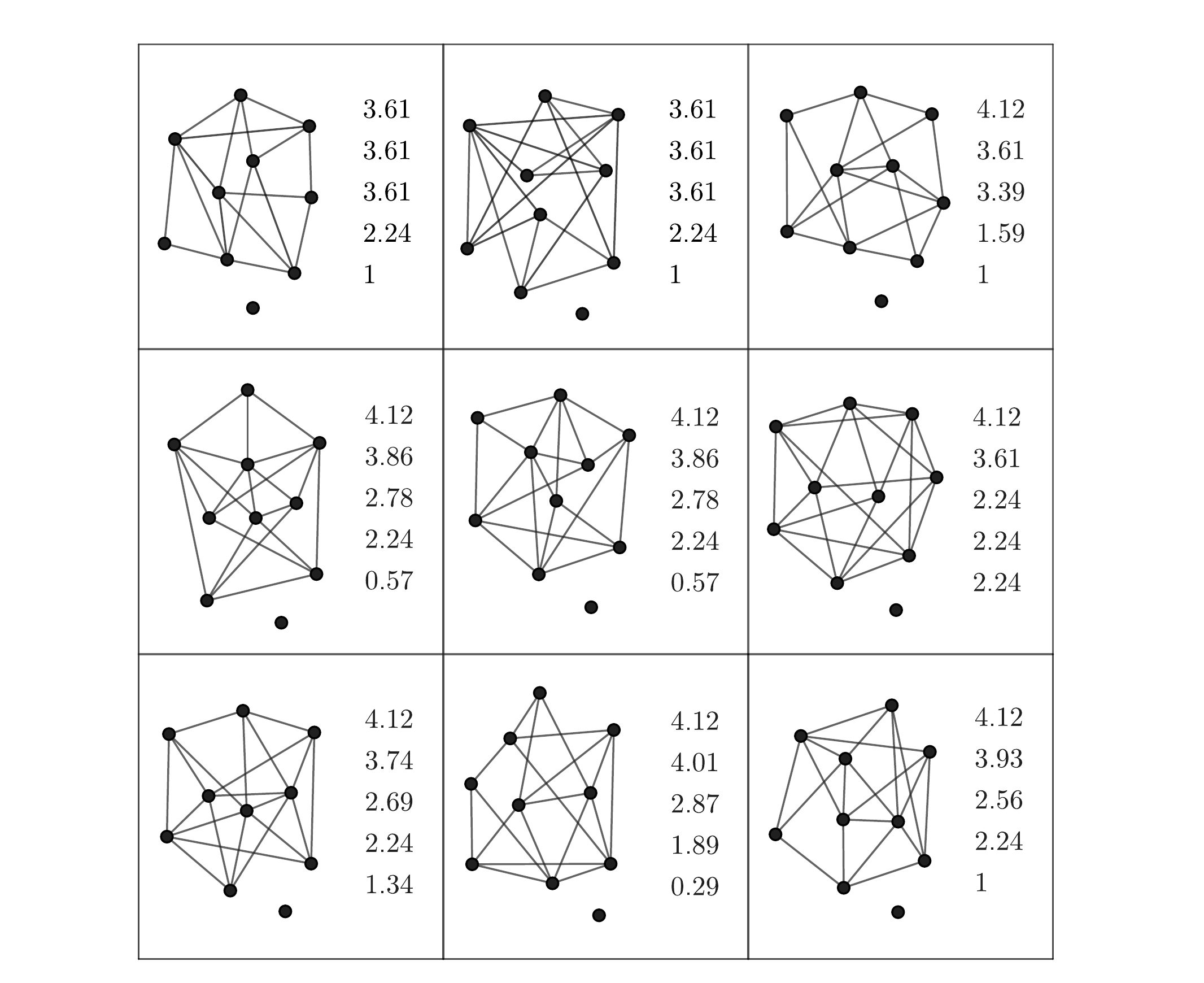,height=300pt,width=400pt}
\caption{Graphs of order $10$ for which the Seidel matrix is a submatrix of a $PC(14)$ (first 2),
or a $PC(18)$ (last 7); the numbers represent the positive part of the (symmetric) spectrum.}\label{10}
\end{figure}
%
%{\bf Remark.}
\\
It is known (see Bollobas and Thomason~\cite{BT}) that every graph of order $m$ is an induced subgraph of the Paley graph of order
$q$ if $q\geq f(m)=(2^{m-2}(m-1)+1)^2$.
%This implies that every Seidel matrix of order $m+1$ with symmetric spectrum $\Sigma$ is a submatrix of a $PC(n)$
%when $n-1\geq (2^{m-2}(m-1)+1)^2$ which leads to Seidel matrices with symmetric spectrum
%$\Sigma\cup\{\pm\sqrt{n-1}^{(n-m-1)/2}\}$.
If the smaller graph is the Paley graph of order $m$, it follows that a $PC(m+1)$ is a principal submatrix of a $PC(q+1)$ if $q\geq f(m)$.
Thus, by Corollary~\ref{cor}, we obtain Seidel matrices with symmetric spectrum containing only four distinct eigenvalues:
\begin{prp}\label{P}
If $q$ and $m$ are prime powers satisfying $q\equiv m\equiv 1$~{\rm (mod 4)} and $q\geq f(m)$, then
there exist a Seidel matrix with spectrum $\{\pm\sqrt{q}^{\,(q-2m-1)/2},\ \pm\sqrt{m}^{\,(m+1)/2}\}$.
\end{prp}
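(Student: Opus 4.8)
The plan is to realize the claimed spectrum as that of the complementary block $C_2$ obtained when a $PC(m+1)$ is cut out of a $PC(q+1)$, and then read off the eigenvalue multiplicities from Corollary~\ref{cor}. First I would invoke the embedding statement recorded just above the proposition: since $q\geq f(m)$, the Paley graph of order $m$ occurs as an induced subgraph of the Paley graph of order $q$, so that $PC(m+1)$ is a principal submatrix of $PC(q+1)$. Writing $C=PC(q+1)$ in the block form
\[
C=
\left[
\begin{array}{cc}
C_1 & N\\
N^\top & C_2
\end{array}
\right]
\]
with $C_1=PC(m+1)$ of order $m+1$, the complementary block $C_2$ has order $(q+1)-(m+1)=q-m$; being a principal submatrix of a symmetric conference matrix, it is a symmetric matrix with zero diagonal and off-diagonal entries $\pm1$, hence a Seidel matrix. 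This $C_2$ is the matrix I would produce as witness.

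Then I would apply Corollary~\ref{cor} with $n=q+1$, so that $\sqrt{n-1}=\sqrt{q}$. The block $C_1=PC(m+1)$ is itself a symmetric conference matrix of order $m+1$, so its spectrum is the symmetric set $\{\pm\sqrt{m}^{\,(m+1)/2}\}$; in particular $C_1$ has symmetric spectrum. By part $(i)$ of the corollary, $C_2$ then also has symmetric spectrum, and by part $(ii)$ the matrices $C_1$ and $C_2$ carry the same eigenvalues with the same multiplicities apart from the eigenvalues equal to $\pm\sqrt{q}$.

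Finally I would do the bookkeeping. Because $q\geq f(m)>m$, the values $\pm\sqrt{m}$ differ from $\pm\sqrt{q}$, so the eigenvalues $\pm\sqrt{m}$ of $C_1$ are transferred to $C_2$ with the same multiplicity $(m+1)/2$ each, contributing $m+1$ of the $q-m$ eigenvalues of $C_2$. The remaining $q-2m-1$ eigenvalues must therefore all equal $\pm\sqrt{q}$, and by the symmetry of the spectrum of $C_2$ they split into $(q-2m-1)/2$ copies of each sign. This yields exactly $\{\pm\sqrt{q}^{\,(q-2m-1)/2},\ \pm\sqrt{m}^{\,(m+1)/2}\}$. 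The only points needing care are integrality and nonnegativity of the exponents: the congruences $q\equiv m\equiv 1~(\mod 4)$ make both $(m+1)/2$ and $(q-2m-1)/2$ integers, and $q\geq f(m)$ forces $q-2m-1\geq 0$. I do not expect a genuine obstacle inside the proof: the substantive content — that one Paley structure embeds in a larger one — is supplied externally by the Bollob\'as--Thomason bound, and the rest is a direct application of the corollary together with a multiplicity count.
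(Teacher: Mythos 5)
Your proposal is correct and follows essentially the same route as the paper, which derives the proposition directly from the Bollob\'as--Thomason embedding of the Paley graph of order $m$ into the Paley graph of order $q$, the resulting inclusion of $PC(m+1)$ as a principal submatrix of $PC(q+1)$, and Corollary~\ref{cor}. Your explicit multiplicity bookkeeping (and the checks on integrality and on $q-2m-1\geq 0$) just spells out details the paper leaves implicit.
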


\section{Characterizations}

It is clear that a Seidel matrix with symmetric spectrum and two distinct eigenvalues is a conference matrix.
The next two theorems deal with three and four distinct eigenvalues.

In a more general setting, the results of this section were already obtained by Greaves and Suda~\cite{GS}.
%The proofs below are focussed on the situation of the present paper: spectral symmetry inside a symmetric conference matrix.
In the proofs below we restrict to the case which is relevant to us: spectral symmetry in a symmetric conference matrix.

\begin{thm}\label{char1}
Suppose $S$ is a Seidel matrix with symmetric spectrum and three distinct eigenvalues,
then $S$ can be obtained from a symmetric conference matrix by deleting one row and the corresponding column.
\end{thm}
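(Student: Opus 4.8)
The plan is to show that $S$, say of order $n$, can be bordered by a single $\pm1$ row and column to produce a symmetric conference matrix of order $n+1$. First I would pin down the spectrum: since the eigenvalue \emph{set} is closed under negation and has odd size $3$, it must be $\{-\lambda,0,\lambda\}$ for some $\lambda>0$, with $0$ genuinely present. Writing $b$ for the multiplicity of $0$ and $a$ for the common multiplicity of $\pm\lambda$, we have $n=2a+b$. The construction I aim for is $C=\left[\begin{array}{cc}0&\mathbf c^\top\\\mathbf c&S\end{array}\right]$, and a direct computation of $C^2$ shows that $C$ is a conference matrix of order $n+1$ (that is, $C^2=nI$) precisely when $\mathbf c\in\{\pm1\}^n$, $S\mathbf c=\0$, and $\mathbf c\mathbf c^\top=nI-S^2$. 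Thus the theorem reduces to producing such a vector, and the heart of the matter is showing that $0$ is a \emph{simple} eigenvalue of $S$, i.e.\ $b=1$: indeed $nI-S^2$ has rank $b$, so it can equal the rank-one matrix $\mathbf c\mathbf c^\top$ only when $b=1$.

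The main obstacle is exactly this simplicity statement, and I expect to settle it by reduction modulo $2$ rather than by spectral estimates. Since the diagonal of $S$ is $0$ and every off-diagonal entry is $\pm1$, we have $S\equiv J-I\pmod 2$, where $J$ is the all-ones matrix. If $\mathbf x$ is an integer vector with $S\mathbf x=\0$, reducing mod $2$ gives $(J-I)\overline{\mathbf x}=\0$ over $\F_2$, whence $\overline{\mathbf x}=(\1^\top\overline{\mathbf x})\1$ is a multiple of $\1$, so $\overline{\mathbf x}\in\{\0,\1\}$. Now let $L=\ker S\cap\mathbf Z^n$ be the integral nullspace lattice, which is saturated of rank $b$; saturation gives $L\cap 2\mathbf Z^n=2L$ (if $\mathbf u\in L$ and $\mathbf u=2\mathbf w$ then $S\mathbf w=\0$, so $\mathbf w\in L$). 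Hence reduction mod $2$ is an injection $L/2L\hookrightarrow\F_2^n$ whose image lies in $\{\0,\1\}$, forcing $2^b\le 2$ and so $b\le 1$. Since $0$ is an eigenvalue, $b=1$. (This forces $n$ odd, consistent with the earlier remarks on $n\equiv1\pmod4$.)

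With $b=1$ the rest is routine. The diagonal identity $(S^2)_{ii}=n-1$ gives $\mbox{trace}(S^2)=n(n-1)=2a\lambda^2$, and $2a=n-1$ yields $\lambda^2=n$. Let $\mathbf u$ span $\ker S$ with $\mathbf u^\top\mathbf u=1$; then the orthogonal projection onto $\ker S$ is $\mathbf u\mathbf u^\top$ and, as $S$ has eigenvalues $0,\pm\sqrt n$, we get $S^2=n(I-\mathbf u\mathbf u^\top)$, so $nI-S^2=n\,\mathbf u\mathbf u^\top$. Comparing diagonals with $(nI-S^2)_{ii}=n-(n-1)=1$ forces $u_i^2=1/n$ for every $i$, so $\mathbf c:=\sqrt n\,\mathbf u$ is a $\pm1$ vector. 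This $\mathbf c$ satisfies $S\mathbf c=\0$ and $\mathbf c\mathbf c^\top=n\,\mathbf u\mathbf u^\top=nI-S^2$, which is exactly what the bordering construction requires; hence $C$ is a symmetric conference matrix of order $n+1$ from which $S$ is recovered by deleting the first row and column.
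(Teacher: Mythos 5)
Your proof is correct, and its engine is the same as the paper's: border $S$ by a $\pm 1$ vector lying in its kernel and verify that the bordered matrix squares to a multiple of $I$. The difference lies in how the two key ingredients are obtained, and here your write-up is actually more self-contained than the paper's. The paper simply asserts that the spectrum must be $\{0,\ \pm\sqrt{n-1}^{\,(n-2)/2}\}$ (with $0$ of multiplicity one), implicitly invoking the known rank bound for Seidel matrices cited in its introduction; you prove this multiplicity-one statement from scratch, via reduction mod $2$ ($S\equiv J-I$ for the all-ones matrix $J$, whose kernel over $\F_2$ is $\{\0,\1\}$) combined with the saturated-lattice argument $L\cap 2{\bf Z}^n=2L$ forcing $2^b\le 2$. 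That is genuinely added content: it is exactly the step the paper leaves to the literature, and without it nothing in the hypotheses forces $b=1$. The extraction of the $\pm 1$ vector is also organized differently. The paper sets $M=(n-1)I-S^2$, notes that $M$ is positive semi-definite of rank one with unit diagonal, concludes $M=\k\k^\top$ for a $\pm1$ vector $\k$, and then needs the separate computation $\k^\top S^2\k=0$ to deduce $S\k=\0$; you instead scale the unit null vector $\mathbf{u}$, using the projector identity $S^2=n(I-\mathbf{u}\mathbf{u}^\top)$ and a diagonal comparison to see that $\sqrt{n}\,\mathbf{u}$ has $\pm1$ entries, which makes the kernel property automatic. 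The two routes are equivalent in substance, and both are short; yours trades the paper's slick rank-one factorization for a cleaner logical flow in which nothing needs to be checked twice. One small wrinkle: your motivational remark that ``$nI-S^2$ has rank $b$'' tacitly presupposes $\lambda^2=n$, which you only derive later (a priori $nI-S^2$ has full rank when $\lambda^2\neq n$); since your actual argument establishes $b=1$ independently and then deduces $\lambda^2=n$ from the trace, this looseness is harmless, but the sentence should be rephrased or dropped.
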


\begin{proof}
Suppose $S$ has order $n-1$.
It follows that $S$ has an eigenvalue $0$ of multiplicity $1$, and two eigenvalues $\pm\sqrt{n-1}$, each of multiplicity $(n-2)/2$.
Define $M=(n-1)I-S^2$, then rank$(M)=1$, the diagonal entries of $M$ are equal to $1$, and $M$ is positive semi-definite.
This implies that $M=\k\k^\top$ for some vector $\k$ with entries $\pm 1$.
It follows that $\k^\top S^2 \k=\k^\top ((n-1)I-M)\k=(n-1)^2-(n-1)^2=0$, hence $S\k=\0$.
Define
\[
C=\left[
\begin{array}{cc}
0 & \k^\top\\
\k & S
\end{array}
\right].
\mbox{ Then }
CC^\top=C^2=
\left[
\begin{array}{cc}
n & \k^\top\!S\\
S\k & \k\k^\top\!+S^2
\end{array}
\right]
= (n-1)I,
\]
because $S\k=\0$ and $S^2=(n-1)I-M=(n-1)I-\k\k^\top$.
\end{proof}

If $S$ is the Seidel matrix of a regular graph $G$, then $G$ is strongly regular, as we have seen in Section~\ref{conf}.
Here we do not require regularity.
However it follows from the above that we can always switch in $C$, such that $\k$ becomes the all-one vector,
in which case the switched $G$ is regular.

\begin{thm}
Suppose $S$ is a Seidel matrix with symmetric spectrum and four distinct eigenvalues, which include $1$ and $-1$ both of multiplicity $1$,
then $S$ can be obtained from a symmetric conference matrix by deleting two rows and the corresponding columns.
\end{thm}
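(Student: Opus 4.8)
The plan is to follow the proof of Theorem~\ref{char1}, now bordering $S$ with two rows and columns. First I would fix the spectrum. Since $S$ has order $n-2$, zero diagonal, symmetric spectrum, and four distinct eigenvalues of which $\pm1$ are simple, comparing $\mbox{trace}(S^2)=(n-2)(n-3)$ with the sum of squares of the eigenvalues forces the other two eigenvalues to be $\pm\sqrt{n-1}$, each of multiplicity $(n-4)/2$. Consequently $M:=(n-1)I-S^2$ is positive semidefinite of rank $2$ with all diagonal entries equal to $2$; as $M$ is an even integer matrix and $|M_{ij}|\le\sqrt{M_{ii}M_{jj}}=2$, every entry lies in $\{0,\pm2\}$. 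The goal is then to produce an $(n-2)\times2$ matrix $N$ with $\pm1$ entries so that
\[
C=\left[\begin{array}{cc} C_1 & N^\top\\ N & S\end{array}\right],\qquad C_1=\left[\begin{array}{cc}0&1\\1&0\end{array}\right],
\]
is a conference matrix. Expanding $C^2$, the two diagonal blocks equal $(n-1)I$ exactly when $NN^\top=M$ and $N^\top N=(n-2)I$, and the off-diagonal block vanishes exactly when $SN=-NC_1$.

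The two diagonal-block identities come for free. A positive semidefinite rank-$2$ matrix with constant diagonal $2$ and entries in $\{0,\pm2\}$ equals $NN^\top$ for a factor $N$ whose rows are vectors of length $\sqrt2$ lying on two orthogonal lines; rotating this factor into the directions $(\pm1,\pm1)$ makes $N$ a $\pm1$ matrix. Since $NN^\top=M$ and $N^\top N$ share the nonzero spectrum $\{n-2,n-2\}$, automatically $N^\top N=(n-2)I$. Everything therefore reduces to the cross term $SN=-NC_1$.

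To attack the cross term I would first record $SMS=M$: the minimal polynomial $S^4=nS^2-(n-1)I$ gives $SMS=(n-1)S^2-S^4=M$. Hence $(SN)(SN)^\top=SMS=M=NN^\top$, so $SN=NQ$ for an orthogonal $Q$; since $S^2$ fixes the column space $W$ of $N$ we get $Q^2=I$, and $\mbox{trace}(Q)=\frac1{n-2}\mbox{trace}(SM)=0$, so $Q$ is a reflection. Replacing $N$ by $NQ_0$ with $Q_0$ a symmetry of the $\pm1$-pattern (the dihedral group of order $8$) conjugates $Q$, and this orbit contains $-C_1$ precisely when the axis of $Q$ is diagonal, i.e.\ when $Q$, equivalently $N^\top SN$, has zero diagonal.

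The main obstacle is to secure this last condition, which I would analyse through the simple $\pm1$-eigenvectors $p,m$ of $S$. From $pp^\top+mm^\top=\frac1{n-2}M$ and $pp^\top-mm^\top=\frac1{n-2}SM$ one obtains $p_i^2,\,m_i^2=\frac{2\mp(S^3)_{ii}}{2(n-2)}$, and positivity together with parity pins $(S^3)_{ii}$ to an even integer in $[-2,2]$, hence $(S^3)_{ii}\in\{0,\pm2\}$ for every $i$. This forces a clean dichotomy: either $p$ and $m$ have disjoint supports with constant nonzero entries, in which case $n_1\propto p+m$ and $n_2\propto m-p$ are $\pm1$ vectors satisfying $Sn_1=-n_2$ and $Sn_2=-n_1$ and we are done; or $p$ and $m$ are themselves $\pm1$-valued. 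The crux is excluding this degenerate alternative. Here I would switch so that the $+1$-eigenvector is $\1$—making the underlying graph regular—and observe that the $-1$-eigenvector partitions the $n-2$ vertices into two equal halves while the eigenvalue equation forces every vertex to send equally many $+1$ and $-1$ Seidel entries to the remaining vertices of its own half; this is impossible once $n\equiv2~(\mod~4)$, since then each half has an odd number of other vertices. Because the matrix being constructed is a symmetric conference matrix—which exists only for $n\equiv2~(\mod~4)$—we may assume this congruence, so the degenerate case never occurs; assembling $C$ from the good case and checking $C^2=(n-1)I$ then completes the argument.
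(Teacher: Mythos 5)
Your proof follows the same skeleton as the paper's (analyze $M=(n-1)I-S^2$, factor it, and border $S$ by two columns), but the engine is genuinely different: the paper pins down the block form $M=2\,\mbox{diag}(\k_1\k_1^\top,\k_2\k_2^\top)$ by rank arguments on small submatrices and then produces the bordering vectors from the parity inequality $\k_1^\top S_1^2\k_1\geq (n-2)/2$, whereas you reduce everything to the reflection $Q=\frac{1}{n-2}N^\top\! SN$ and the $\pm 1$-eigenvectors $p,m$. The steps $SMS=M$, $Q^2=I$, $\mbox{trace}(Q)=0$, the dihedral-orbit criterion, and $(S^3)_{ii}\in\{0,\pm 2\}$ all check out. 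One small repair: the ``clean dichotomy'' does not follow from $(S^3)_{ii}\in\{0,\pm 2\}$ alone, since a priori the three pointwise types (support of $p$ only, support of $m$ only, both) could be mixed over $i$; you also need that $p$ and $m$ lie in the column space of $N$, so that $p_i^2$ takes a single value on each of the two row-blocks of $N$. With that observation (available to you, since you built $N$ from exactly those two blocks) the dichotomy is immediate.

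The genuine gap is the final step. The congruence class of $n$ is determined by the given matrix $S$, not by the object you are trying to build, so ``we may assume $n\equiv 2~(\mod~4)$ because symmetric conference matrices exist only for such orders'' is circular. If $S$ has order $n-2$ with $n\equiv 0~(\mod~4)$, the theorem implicitly asserts that no such $S$ exists---and that is exactly what your degenerate case leaves open, because your parity argument bites only when $n\equiv 2~(\mod~4)$ (only then is the number $(n-4)/2$ of other vertices in a half odd). Fortunately the hole can be filled: in the degenerate case, after switching so that $S\1=\1$, the graph is regular, so its adjacency matrix $A=\frac{1}{2}(J-I-S)$ (with $J$ the all-ones matrix) has the numbers $(-1\pm\sqrt{n-1})/2$ among its eigenvalues; when $n\equiv 0~(\mod~4)$ we have $n-1\equiv 3~(\mod~4)$, so $n-1$ is not a perfect square and these are irrational roots of the monic quadratic $x^2+x-(n-2)/4$, whose constant term is not an integer---hence they are not algebraic integers, contradicting the fact that every eigenvalue of an integer matrix is. Together with your parity argument this eliminates the degenerate case for every even $n$, and your construction then proves the theorem unconditionally. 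It is only fair to add that the paper's own proof has the same hidden restriction: its claim that ``the entries of $S_1\k_1$ are odd integers'' holds exactly when $(n-4)/2$ is odd, i.e.\ when $n\equiv 2~(\mod~4)$, so in the case $n\equiv 0~(\mod~4)$ the published argument also needs a supplement of this kind.
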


\begin{proof}
Suppose $S$ has order $n-2$.
Clearly $n$ is even, and $S^2$ has an eigenvalue $1$ of multiplicity $2$.
From trace$(S^2)=(n-2)(n-3)$ it follows that $S^2$ has one other eigenvalue equal to $n-1$ of multiplicity $n-4$.
Define $M=(n-1)I-S^2$.
Then rank$(M)=2$, and $M$ is positive semi-definite with an eigenvalue $n-2$ of multiplicity $2$.
The diagonal entries of $M$ are equal to $2$, and the off-diagonal entries are even integers.
Let $T$ be a pricipal submatrix of $M$ of order $2$, then $T$ is positive semi-definite, and therefore $T$ is one of the following:
\[
%T_{-}=
\left[
\begin{array}{rr}
2 & -2\\
-2 & 2
\end{array}
\right],
\
%T_{+}=
\left[
\begin{array}{rr}
2 & 2\\
2 & 2
\end{array}
\right],
\mbox{ or }
%T_{0}=
\left[
\begin{array}{cc}
2 & 0\\
0 & 2
\end{array}
\right].
\]
Since rank$(M)=2$, $M$ has a $2\times 2$ pricipal submatrix of rank~2, so the last option $T=2I$ does occur.
Consider the two rows in $M$ corresponding to $T=2I$.
At each coordinate place, the two entries can only consist of one $0$ and one $\pm 2$,
since all other options would create a submatrix of $M$ of rank~3.
Every other row of $M$ is a linear combination of these two rows, and because $M$ is symmetric,
we can conclude that the rows and columns of $S$ can be ordered such that
\[
M=2\left[
\begin{array}{cc}
M_1 & O\\
O & M_2
\end{array}
\right],
\]
where $M_1$ and $M_2$ have $1$ on the diagonal, $\pm 1$ off-diagonal, and $\rank(M_1)=\rank(M_2)=1$.
This implies that there exist vectors $\k_1$ and $\k_2$ with entries $\pm 1$, such that
$M_1=\k_1\k_1^\top$ and $M_2=\k_2\k_2^\top$.
Both $M_1$ and $M_2$ have one nonzero eigenvalue $(n-2)/2$, which equals the trace,
so $M_1$ and $M_2$ have the same order $(n-2)/2$.
With the corresponding partition of $S$ we have
\[
S=\left[
\begin{array}{cc}
S_1 & R\\
R^\top & S_2
\end{array}
\right],
\ S^2=\left[
\begin{array}{cc}
S_1^2+RR^\top & S_1 R + RS_2\\
R^\top\!S_1+ S_2 R^\top & R^\top\!R+S_2^2
\end{array}
\right]=(n-1)I-2
\left[
\begin{array}{cc}
M_1 & O\\
O & M_2
\end{array}
\right].
\]
%Consider $\k=\left[{~}^{\l}_{\m}\right]$.
We conclude that $S_1 R=-RS_2$, and $S_1^2+RR^\top=(n-1)I-2M_1$.
Using $\k_1^\top \k_1=(n-2)/2$, and $M_1=\k_1\k_1^\top$ we obtain
\[
\k_1^\top S_1^2\k_1+\k_1^\top RR^\top\k_1 = \k_1^\top(S_1^2+RR^\top)\k_1 = \k_1^\top((n-1)I-2M_1)\k_1=(n-2)/2.
\]
The entries of $S_1\k_1$ are odd integers, so $\k_1^\top S_1^2\k_1\geq(n-2)/2$.
This implies $\k_1^\top RR^\top\k_1=0$ and $\k_1^\top S_1^2\k_1=(n-2)/2$,
so $R^\top\k_1=\0$, and $S_1\k_1$ is a $(\pm 1)$-vector $\h_1$ (say).
Next observe that
%$\h_1^\top R=\k_1^\top S_1 R=-\k_1^\top R S_2=\0^\top$,
$R^\top \h_1 = R^\top S_1\k_1 = -S_2R^\top\k_1=\0$,
and also $S_1\h_1 = S_1^2\k_1 = (-RR^\top + (n-1)I - 2\k_1^\top\k_1)\k_1=\k_1$.
Similarly, $\h_2=S_2\k_2$ is a $(\pm 1)$-vector, $S_2\h_2=\k_2$, and $R\k_2=R\h_2=\0$.
Define
\[
C = \left[
\begin{array}{crrr}
   0 &   1\   & -\h_1^\top & \,\ \h_2^\top \\
   1 &   0\   & -\h_1^\top & -\h_2^\top \\
\k_1 &  \k_1  & S_1~       &   R~~        \\
\k_2 & -\k_2  & R^\top     &   S_2~
\end{array}
\right].
\]
Then $CC^\top=(n-1)I$.
Finally $C^\top C= (n-1)I$ implies that $\h_1=-\k_1$ and $\h_2=\k_2$.
\end{proof}

As we have seen in Proposition~\ref{P},
there exist many Seidel matrices with four distinct eigenvalues and symmetric spectrum different from the ones in the above theorem.
Another example is the Seidel matrix of a complete graph of order $m$, extended with $m$ isolated vertices.
(see Ghorbani~et~al~\cite{GHMP}, Theorem 2.2).
It is not likely that the case of four eigenvalues can be characterised in general.

Note that the above characterizations lead to nonexistence of some Seidel spectra.
For example, there exist no graphs with Seidel spectra $\{0,\ \pm\sqrt{21}{\,}^{10}\}$
and $\{\pm 1,\ \pm\sqrt{21}{\,}^9\}$, because there exist no conference matrix of order $22$.

\section{Sign-symmetry}

A graph $G$ is called sign-symmetric if $G$ is switching isomorphic to its complement.
If $S$ is the Seidel matrix of a sign-symmetric graph $G$ (we also call $S$ sign-symmetric),
then $S$ and $-S$ are equivalent, and therefore $S$ has symmetric spectrum.

Every $PG(n)$ is sign-symmetric, but many other conference matrices are not.
Up to equivalence, there are at least two conference matrices of order 38,
and at least 80 of order 46 which are not sign-symmetric (see Bussemaker, Mathon and Seidel~\cite{BMS}).

If we delete one or two rows and columns from a $PC(n)$, the obtained Seidel matrix will be sign-symmetric.
But in general, when we apply Corollary~\ref{cor},
there is not much we can say about the relation between sign-symmetry of $C$, $C_1$ and $C_2$.

\end{document}